\newtheorem{lemma}{Lemma}[section]
\newtheorem{proposition}[lemma]{Proposition}
\renewenvironment{proof}[1][\proofname]{{\noindent\bf #1. }}{\qed}
\newtheorem*{maintheorem}{Main Theorem}
\theoremstyle{definition}
\newtheorem{remark}[lemma]{Remark}
\newcommand{\abs}[1]{\ensuremath{|#1|}}
\newcommand{\op}{\operatorname}
\newcommand{\ce}[2]{\pmb{\op{C}}_{#1}(#2)}
\newcommand{\ze}[1]{\pmb{\op{Z}}(#1)}
\begin{document}

\title{\bf A completeness criterion for the common divisor graph on $p$-regular class sizes}

\author{\sc V. Sotomayor\thanks{Departamento de Álgebra, Facultad de Ciencias, Universidad de Granada, Av. Fuente Nueva s/n, 18071 Granada, Spain. \newline
\Letter: \texttt{vsotomayor@ugr.es} \newline 
ORCID: 0000-0001-8649-5742 \newline \rule{6cm}{0.1mm}\newline
This research is supported by Ayuda a Primeros Proyectos de Investigación (PAID-06-23), Vicerrectorado de Investigación de la Universitat Politècnica de València (UPV). \newline
}}

\date{}

\maketitle

\begin{abstract}
\noindent Let $G$ be a finite group. For a fixed prime $p$, let $\Gamma_p(G)$ be the common divisor graph built on the set of sizes of $p$-regular conjugacy classes of $G$: this is the simple undirected graph whose vertices are the class sizes of those non-central elements of $G$ such that $p$ does not divide their order, and two distinct vertices are adjacent if and only if they are not coprime. In this note we prove that if $\Gamma_p(G)$ is a $k$-regular graph with $k\geq 1$, then it is a complete graph with $k+1$ vertices.

\medskip

\noindent \textbf{Keywords} Finite groups $\cdot$ Conjugacy classes $\cdot$ $p$-regular elements $\cdot$ Regular graphs $\cdot$ Complete graphs

\smallskip

\noindent \textbf{2020 MSC} 20E45 
\end{abstract}


\section{Introduction}

Every group considered in the following discussion is tacitly assumed to be a finite group. A long-standing area of research within group theory is the study of the interconnection that exists between the algebraic structure of a group and the arithmetical properties of certain positive integers associated to it. In this framework, several graphs have been introduced to get a better understanding of these arithmetical features. For instance, if $X$ is a set of positive integers, then the \emph{common divisor graph} $\Gamma(X)$ is the simple undirected graph whose vertices are the numbers in $X$ strictly greater than 1, and two vertices are adjacent whenever they have a common prime divisor. Another graph that has revealed to be very useful is the \emph{prime graph} $\Delta(X)$, whose vertices are the prime divisors of any number in $X$, and two vertices are adjacent whenever there is some number in $X$ which is divisible by both primes. We refer the interested reader to the excellent survey \cite{Lewis} for results on this topic when $X$ is either the set of irreducible complex character degrees, or the set of class sizes.

Several variations on the theme have appeared from the nineties up to nowadays. For instance, instead of focusing on the full set $X=cs(G)$ of class sizes of a group $G$, some smaller subsets have been considered. The goal is similar: to examine whether some local information about the group structure can still be read off from graph-theoretical properties of the corresponding subgraphs of $\Gamma(G):=\Gamma(cs(G))$ or $\Delta(G):=\Delta(cs(G))$. For example, the subgraphs induced by the subset of class sizes of elements lying in a normal subgroup of $G$ give information on the normal structure of $G$ (\emph{cf.} \cite{BFM, RA}). Another instance of this fact is provided by the subset $cs_p(G)$ of class sizes of $p$-regular elements of $G$, for a given prime $p$, since the respective induced subgraphs $\Gamma_p(G):=\Gamma(cs_p(G))$ and $\Delta_p(G):=\Delta(cs_p(G))$ encode information about the $p$-structure of $G$ when it is $p$-soluble (see \cite{BF1, BF2, BFsurvey, LZ} and Remark \ref{graphs}).

In this field of study, a central question that arises is the following: which sets of positive integers can occur as $cs_p(G)$ for some group $G$ and some prime $p$? The aim of this paper is to give an answer to this question via the following criterion for the graph $\Gamma_p(G)$ to be complete.

\begin{maintheorem}
\label{teoA}
Let $G$ be a group, and $p$ be a prime. Then $\Gamma_p(G)$ is a $k$-regular graph for some integer $k\geq 1$ if and only if it is a complete graph with $k+1$ vertices.
\end{maintheorem}

We highlight that, in contrast to all the results within the literature of graphs on $p$-regular conjugacy classes (\emph{cf.} \cite{BF1, BF2, LZ}), our Main Theorem is also valid for groups that are not $p$-soluble. Usually, the $p$-solubility assumption is required only to ensure the next feature: \emph{the product $BC$ of two $p$-regular classes $B$ and $C$ of $G$ such that $(|B|, |C|)=1$ is again a $p$-regular conjugacy class of $G$}. The reader can find a proof of this fact when $G$ is $p$-soluble in \cite[Lemma 1]{LZ}. Observe that the product $BC$ is certainly a conjugacy class of $G$, since $B$ and $C$ have coprime sizes. Thus the role of $p$-solubility in \cite[Lemma 1]{LZ} is only to guarantee that some (and so, any) element in $BC$ is $p$-regular whenever $B$ and $C$ are $p$-regular coprime classes. Very recently, this has been demonstrated (see \cite[Theorem D]{many}) via the classification of finite simple groups. Therefore, the $p$-solubility assumption can be avoided in many results within the literature of this topic: see Section 4 of \cite{many} for a collection of them, including our Main Theorem.

As a consequence of the Main Theorem when we take a prime $p$ which does not divide the group order, we recover the main result of \cite{BCHP} concerning the ordinary graph $\Gamma(G)$, which was firstly proved in \cite{BHPS} for values $k\in\{2,3\}$. We remark that the authors of \cite{BCHP} made use of the characterisation provided in \cite{K} about groups $G$ such that the diameter of $\Gamma(G)$ is exactly three. However, our approach utilises different techniques, since the corresponding situation for the diameter of the subgraph $\Gamma_p(G)$ on $p$-regular class sizes has not yet been settled. In fact, a conjecture regarding the $p$-structure of such groups, and a partial positive answer, has recently been established in \cite{FJS}.

Observe that the subgraph $\Gamma_p(G)$ may be regular even if the ordinary graph $\Gamma(G)$ is not: for instance, if $p=2$ and $G=A\Gamma(\mathbb{F}_8)$ is an affine semilinear group over the field of eight elements, then $cs(G)=\{1, 7, 24, 28\}$ so $\Gamma(G)$ is certainly non-regular (and non-complete), but $cs_2(G)=\{1,24,28\}$ and thus $\Gamma_2(G)$ is regular (and so complete).

It is well-known that the graphs $\Gamma(X)$ and $\Delta(X)$ defined over a set $X$ of positive integers share some significant features: they possess the same number of connected components, and their diameters differ by at most one (see \cite[Corollary 3.2]{Lewis}). Nonetheless, the subgraph $\Delta_p(G)$ has a different behaviour with respect to the situation studied in the Main Theorem, even when $cs_p(G)=cs(G)$, since there exist groups $G$ such that $\Delta(G)$ is connected, regular and non-complete, as it was pointed out in \cite{BCHP}.


\section{The results}

Hereafter, we write $\pi(G)$ for the set of prime divisors of $\abs{G}$, and if $a^G$ is a conjugacy class of $G$, then $\pi(a^G)$ is the set of prime divisors of $|a^G|=|G:\ce{G}{a}|$. Recall that each element $g\in G$ can be decomposed as product of pairwise commuting elements of prime power order, say $g_{q_1},...,g_{q_s}$, for some integer $s\geq 1$ and certain primes $\{q_1,...,q_s\}$, and each $q_i$-element $g_{q_i}$ is called the $q_i$-part of $g$; we call this the \emph{primary decomposition} of $g$. The greatest common divisor of two positive integers $a$ and $b$ is denoted by $(a,b)$. If $\pi$ is a set of primes, then the largest $\pi$-number that divides a positive integer $n$ is $n_{\pi}$.

A clique $C$ of a graph $\Gamma$ is a subset of vertices such that every two distinct vertices of $C$ are adjacent in $\Gamma$, \emph{i.e.} the subgraph of $\Gamma$ induced by $C$ is a complete graph. The vertices of a graph $\Gamma$ that are adjacent to a given vertex $v$ are called the neighbours of $v$, and the set of neighbours of $v$ is denoted by $\mathcal{N}(v)$. Two vertices $v,w$ of $\Gamma$ are called closed twins (or simply twins) if $\mathcal{N}(v)\cup\{v\}=\mathcal{N}(w)\cup\{w\}$. This concept clearly defines an equivalence relation on the vertex set of $\Gamma$.

The remaining notation and terminology used are standard in the frameworks of group theory and graph theory.

Let us start by proving three preliminary lemmas. In the first one, we will make use of certain theorems in \cite{BF2}, and so the next observation is important.

\begin{remark}
\label{graphs}
We point out that the common divisor graph $\Gamma_p(G)$ slightly differs with the graph on $p$-regular conjugacy classes mentioned in \cite{BF1, BF2, BFsurvey}: in those papers, the vertices are the non-central classes of $p$-regular elements of $G$, and there is an edge between two classes if their cardinalities have a common prime divisor. Nevertheless, it is easy to realise that this last graph can be transformed into $\Gamma_p(G)$ by collapsing all the classes with the same size into a single vertex; thus they have the same number of connected components, and the diameter of each component will be the same (except if it contains classes all of the same size). 
\end{remark}

\begin{lemma}
\label{connected}
Let $G$ be a group, and $p$ be a prime. If $\Gamma_p(G)$ is a $k$-regular graph for some $k\geq 1$, then $\Gamma_p(G)$ is connected.
\end{lemma}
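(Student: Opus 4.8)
The plan is to combine the elementary observation that a $k$-regular graph with $k\geq 1$ has no isolated vertices with the known classification of $p$-separable groups whose $p$-regular class size graph is disconnected. First I would dispose of trivialities: we may assume $\Gamma_p(G)$ has at least one vertex, and since it is $k$-regular with $k\geq 1$, every vertex is adjacent to some other vertex. Hence $\Gamma_p(G)$ has no isolated vertex and, being $k$-regular, it has at least $k+1\geq 2$ vertices. Now suppose, for contradiction, that $\Gamma_p(G)$ is disconnected.

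The key external input I would then invoke is the $p$-regular analogue of the Bertram--Herzog--Mann theorem on conjugacy class sizes: for a $p$-separable group $G$, the graph $\Gamma_p(G)$ has at most two connected components, and if it is disconnected then $cs_p(G)=\{1,m,n\}$ for two coprime integers $m,n>1$ (see \cite{BFsurvey} and the references therein). Granting this, the disconnectedness of $\Gamma_p(G)$ forces its vertex set to be exactly $\{m,n\}$ with $(m,n)=1$; the two vertices are then non-adjacent, so $\Gamma_p(G)$ is the edgeless graph on two vertices. But such a graph is $0$-regular, contradicting $k\geq 1$. Therefore $\Gamma_p(G)$ must be connected, as claimed.

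Everything in this argument is formal once the structural dichotomy of the second step is in hand, so the main point of care is simply to cite the sharpest available version of it. Its proof shows that in the disconnected case $G$ is, modulo a suitable normal $p$-subgroup (around $\rad{p}{G}$), a quasi-Frobenius group with abelian Frobenius kernel and complement, which is exactly what collapses each connected component to a single vertex and yields $(m,n)=1$. This is also the place where $p$-separability genuinely enters, and it is one of the results alluded to in the introduction for which the validity of Conjecture~\ref{conj} would permit dropping that hypothesis. I do not expect any combinatorial obstacle here; the only real work is bibliographic.
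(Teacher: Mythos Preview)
Your approach is essentially the same as the paper's: assume $\Gamma_p(G)$ is disconnected, invoke the known structure theory for that situation, and produce an isolated vertex to contradict $k$-regularity with $k\geq 1$. The only difference is packaging. You want to quote a single result asserting that in the disconnected case $cs_p(G)=\{1,m,n\}$ with $(m,n)=1$; the paper instead carries out a short case analysis on whether $p$ lies in $\pi_1$, $\pi_2$, or neither, citing \cite[Theorem 1, Theorem 5(a)]{BF1} and \cite[Theorem 4, Theorem 8]{BF2} separately, and in each case deduces that \emph{at least one} component is a single vertex.

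One point to be careful about: the clean statement you want---that \emph{both} components collapse to a single vertex, i.e.\ $|cs_p(G)|=3$---is a bit stronger than what the paper actually extracts from the literature (in the case $p\in\pi_2$ the argument only shows the component $X_1$ not containing the maximal class size is a singleton). That weaker conclusion already gives the contradiction, since one isolated vertex is enough. So your argument is fine as long as you either locate the sharp $|cs_p(G)|=3$ statement explicitly, or relax your claim to ``at least one component is a single vertex'' and cite the same case-by-case results the paper uses. As you say, the issue is bibliographic rather than mathematical.
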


\begin{proof}
Arguing by contradiction, let us suppose that $\Gamma_p(G)$ is non-connected. In virtue of \cite[Corollary 4.1]{many} we know that $\Gamma_p(G)$ has two connected components, say $X_1$ and $X_2$. Set $\pi_i$ for the union of $\pi(C)$ for every $C\in X_i$, where $i\in \{1,2\}$. We may assume that the maximum size of a $p$-regular class lies in $X_2$. We are going to distinguish certain cases depending on where the prime $p$ lies.

If $p\notin \pi_1\cup\pi_2$, then by \cite[Corollary 4.3 i)]{many} we obtain that $G$ has a $p$-complement $H$ as a direct factor, $H/\ze{H}$ is a Frobenius group, and the inverse images in $H$ of the kernel and a complement of $H/\ze{H}$ are abelian. In particular each component of $\Gamma_p(G)=\Gamma(H)$ is a single vertex, which contradicts the assumption of $k$-regularity with $k\geq 1$. If $p\in\pi_1$, then by \cite[Theorem 8]{BF2} ---the $p$-solubility is actually not needed there, see \cite[Corollary 4.3 ii)]{many}--- it follows that one component of $\Gamma_p(G)$ consists of a single vertex, a contradiction again. Hence we may suppose that $p\in\pi_2$. Let $a^G\in X_1$ and $b^G\in X_2$, and let $\ze{G}_{p'}$ be the $p$-complement of $\ze{G}$. Then $$|G:\ze{G}_{p'}|=|G:\ce{G}{b}|\cdot |\ce{G}{b}:\ce{G}{b}\cap\ce{G}{a}|\cdot |\ce{G}{b}\cap\ce{G}{a}:\ze{G}_{p'}|.$$ Applying \cite[Theorem 4 (a) and (c)]{BF2} ---the $p$-solubility is not needed there either, due to \cite[Theorem D]{many}--- we deduce that $|\ce{G}{a}:\ze{G}_{p'}|$ is a $\pi_2$-number, and therefore $|\ce{G}{b}\cap\ce{G}{a}:\ze{G}_{p'}|$ so is. Since $G=\ce{G}{a}\ce{G}{b}$, it follows $|a^G|=|\ce{G}{b}:\ce{G}{b}\cap\ce{G}{a}|=|G:\ze{G}_{p'}|_{\pi_1}$. Consequently $X_1$ consists of a single vertex, which is not possible.
\end{proof}

\smallskip

The following two lemmas adapt the ideas of \cite[Lemmas 2.3 and 2.4]{BCHP} for the graph $\Gamma_p(G)$. We include their proofs for the sake of thoroughness.

\begin{lemma}
\label{prime_power}
Let $G$ be a group, and $p$ be a prime. Suppose that $\Gamma_p(G)$ is $k$-regular for some $k\geq 1$. If there exists a non-central $p$-regular element $x\in G$ such that $|x^G|$ is a prime power, then $\Gamma_p(G)$ is a complete graph.
\end{lemma}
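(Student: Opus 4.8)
The plan is to exploit the $k$-regularity together with the existence of a prime-power vertex $|x^G| = q^a$ to force every vertex to be adjacent to every other, i.e. to show the graph has diameter at most $1$. Since $\Gamma_p(G)$ is connected by Lemma~\ref{connected}, it suffices to rule out the existence of two non-adjacent vertices. First I would record the elementary graph-theoretic fact that in a $k$-regular graph, the neighbourhood $\mathcal{N}(v)$ of the vertex $v := |x^G|$ has exactly $k$ elements, and every one of those $k$ neighbours $w$ satisfies: $q \mid w$ (since $w$ is adjacent to the prime power $q^a$). Thus $\{v\} \cup \mathcal{N}(v)$ is a set of $k+1$ vertices each divisible by $q$, forming a clique: indeed any two neighbours $w_1, w_2$ of $v$ are both divisible by $q$, hence adjacent. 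So $\{v\} \cup \mathcal{N}(v)$ is a clique of size $k+1$ in a $k$-regular graph, which forces it to be a connected component — and by connectedness it is the whole graph. That would complete the argument cleanly, provided the deduction "$w$ adjacent to $q^a$ implies $q \mid w$" combined with "the $q$-divisible vertices form a clique" really does close off $\{v\}\cup\mathcal{N}(v)$ from the rest.

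More carefully: let $C$ be the set of all vertices divisible by $q$. Every element of $C$ is adjacent to $v = q^a$, so $C \subseteq \{v\} \cup \mathcal{N}(v)$, and $|C| \le k+1$. Conversely $v \in C$ and $\mathcal{N}(v) \subseteq C$, so $C = \{v\} \cup \mathcal{N}(v)$ and $|C| = k+1$. Now $C$ is a clique: any two distinct $w_1, w_2 \in C$ share the prime $q$, hence are adjacent. A clique of size $k+1$ in a $k$-regular graph must be a union of connected components — each of its vertices already has $k$ neighbours inside $C$, so no edges leave $C$. Since $\Gamma_p(G)$ is connected (Lemma~\ref{connected}), $C$ is all of $\Gamma_p(G)$, and therefore $\Gamma_p(G)$ is the complete graph $K_{k+1}$.

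The step I expect to require the most care is not in this graph-theoretic skeleton but in confirming that the argument genuinely uses nothing beyond what is available: the only group-theoretic input is Lemma~\ref{connected} (connectedness under $k$-regularity), and the hypothesis supplies a prime-power vertex outright, so no structural class-size theorem is needed here. The one subtlety worth stating explicitly is that a vertex adjacent to the prime power $q^a$ must be divisible by $q$ — this is immediate from the definition of the common divisor graph, since adjacency means the two numbers share a prime divisor and $q^a$ has only the prime divisor $q$. Everything else is a short counting argument: a $(k+1)$-clique in a $k$-regular graph is a full component. Thus the lemma follows, and in fact this gives the cleaner conclusion that $\Gamma_p(G) = K_{k+1}$, matching the Main Theorem's assertion.
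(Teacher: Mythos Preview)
Your proof is correct and follows essentially the same approach as the paper: observe that the prime-power vertex $v=q^a$ and its $k$ neighbours are all divisible by $q$, hence form a $(k+1)$-clique, and then use $k$-regularity together with connectedness (Lemma~\ref{connected}) to conclude this clique is the entire graph. The paper phrases the final step as a short contradiction (a vertex outside the clique adjacent to some neighbour would force degree $\geq k+1$), whereas you argue directly that no edges can leave a $(k+1)$-clique in a $k$-regular graph; these are two wordings of the same observation.
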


\begin{proof}
If $|x^G|$ is a power of a prime $q$ (which may be equal to or different from $p$), then its $k$ neighbours are also divisible by $q$, and thus $\mathcal{N}(|x^G|)\cup\{|x^G|\}$ is a clique of $\Gamma_p(G)$. Let us suppose, by contradiction, that $\Gamma_p(G)$ is not complete. Since $\Gamma_p(G)$ is connected by the previous lemma, there exists a $p$-regular element $y\in G\smallsetminus \ze{G}$ such that $|y^G|$ not divisible by $q$ but $|y^G|$ is adjacent to a neighbour of $|x^G|$. That neighbour has therefore degree at least $k+1$, which is certainly not possible.
\end{proof}

\begin{lemma}
\label{power}
Let $G$ be a group, and $p$ be a prime. Suppose that $\Gamma_p(G)$ is regular. Let $x\in G\smallsetminus\ze{G}$ be a $p$-regular element. If $y=x^{\alpha}\notin\ze{G}$ for some integer $\alpha$, then $|x^G|$ and $|y^G|$ are either equal or twins in $\Gamma_p(G)$.
\end{lemma}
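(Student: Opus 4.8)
The plan is to exploit the containment of centralisers that comes from taking powers: if $y=x^{\alpha}$, then $\ce{G}{x}\leq \ce{G}{y}$, so $|y^G|$ divides $|x^G|$, and in particular $\pi(y^G)\subseteq\pi(x^G)$. First I would dispose of the trivial case $|x^G|=|y^G|$, so that we may assume $|y^G|$ is a proper divisor of $|x^G|$ (hence $|y^G|<|x^G|$ and both vertices are genuinely present in $\Gamma_p(G)$, since $y\notin\ze{G}$). Because $y=x^{\alpha}$ is again a $p$-regular element (powers of $p$-regular elements are $p$-regular), both $|x^G|$ and $|y^G|$ are vertices of $\Gamma_p(G)$. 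Since $\pi(y^G)\subseteq\pi(x^G)$ and $|y^G|>1$, the two vertices share a common prime divisor, so $|x^G|$ and $|y^G|$ are adjacent in $\Gamma_p(G)$.

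Next I would establish that $\mathcal{N}(|y^G|)\subseteq\mathcal{N}(|x^G|)\cup\{|x^G|\}$: any vertex $|z^G|$ adjacent to $|y^G|$ has a prime in common with $|y^G|$, hence with $|x^G|$ (as $\pi(y^G)\subseteq\pi(x^G)$), so $|z^G|$ is either equal to $|x^G|$ or adjacent to it. Therefore $\mathcal{N}(|y^G|)\cup\{|y^G|\}\subseteq\mathcal{N}(|x^G|)\cup\{|x^G|\}$ once we note $|y^G|\in\mathcal{N}(|x^G|)$ from the previous paragraph. The containment of the closed neighbourhoods, together with $k$-regularity, then forces equality: both closed neighbourhoods have exactly $k+1$ elements, and a $(k+1)$-element set contained in another $(k+1)$-element set must coincide with it. Hence $\mathcal{N}(|x^G|)\cup\{|x^G|\}=\mathcal{N}(|y^G|)\cup\{|y^G|\}$, which is precisely the definition of $|x^G|$ and $|y^G|$ being (closed) twins.

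I do not anticipate a serious obstacle here; the lemma is essentially a bookkeeping argument built on the elementary fact $\ce{G}{x}\leq\ce{G}{x^{\alpha}}$. The only point requiring a little care is the logical structure of the dichotomy: one must phrase the statement so that when $|x^G|=|y^G|$ we are in the first case and need prove nothing further, and otherwise we may safely treat $|x^G|$ and $|y^G|$ as two \emph{distinct} vertices, so that "twin" is meaningful. A secondary subtlety is confirming that $y$ being non-central guarantees $|y^G|>1$, so that $|y^G|$ is indeed a vertex of $\Gamma_p(G)$ and has at least one neighbour (it has a prime divisor, which it shares with $|x^G|$); this is immediate from $y\notin\ze{G}$. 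Note also that regularity (not $k$-regularity with a specified $k\geq 1$) suffices, matching the hypothesis of the lemma, provided we interpret the empty graph or isolated-vertex cases harmlessly — but since $|x^G|$ and $|y^G|$ are already shown adjacent, no vertex here is isolated.
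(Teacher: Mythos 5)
Your proof is correct and follows essentially the same route as the paper: $\ce{G}{x}\leqslant\ce{G}{y}$ gives $|y^G|\mid|x^G|$, hence a containment of neighbourhoods, and regularity forces the closed neighbourhoods to coincide. In fact your bookkeeping with \emph{closed} neighbourhoods (noting that a vertex adjacent to $|y^G|$ is either adjacent to $|x^G|$ or equal to it) is slightly more careful than the paper's statement $\mathcal{N}(|y^G|)\subseteq\mathcal{N}(|x^G|)$, which taken literally overlooks that $|x^G|$ itself lies in $\mathcal{N}(|y^G|)$ but not in $\mathcal{N}(|x^G|)$.
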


\begin{proof}
Suppose that $|x^G|\neq |y^G|$. Since $\ce{G}{x}\leqslant\ce{G}{y}$, then $|y^G|$ divides $|x^G|$, so $\mathcal{N}(|y^G|)\subseteq \mathcal{N}(|x^G|)$. But the graph is regular by assumptions, which implies $\mathcal{N}(|y^G|)= \mathcal{N}(|x^G|)$. Moreover, as they are adjacent vertices of $\Gamma_p(G)$, it follows $\mathcal{N}(|y^G|)\cup\{|y^G|\}= \mathcal{N}(|x^G|)\cup\{|x^G|\}$. Hence $|y^G|$ and $|x^G|$ are twins in $\Gamma_p(G)$.
\end{proof}

\medskip

As mentioned in the Introduction, the proof of our Main Theorem differs from that originally appeared in \cite{BCHP}. We have instead adapted some arguments of \cite{RA}, as the proposition below.

\begin{proposition}
\label{key}
Let $G$ be a group, and $p$ be a prime. Suppose that $|G/\ze{G}|_{p'}$ is divisible by two primes $q\neq r$. Let $x_0,y_0\in G\smallsetminus\ze{G}$ be a $q$-element and a $r$-element, respectively, such that $x_0y_0=y_0x_0$. Besides, suppose that $\Gamma_p(G)$ is non-complete and regular. Then the following statements hold:
\begin{enumerate}
\setlength{\itemsep}{-1mm}
\item[\emph{(a)}] There exists a $q$-element $x_1\in G\smallsetminus\ze{G}$ and a $r$-element $y_1\in G\smallsetminus\ze{G}$ such that $|x_1^G|,|y_1^G|\in \mathcal{N}(|(x_0y_0)^G|)$ with $(|x_1^G|,|y_1^G|)=1$, $(|x_1^G|,qr)=r$ and $(|y_1^G|,qr)=q$. 
\item[\emph{(b)}] $|x_0^G|$ is divisible by $r$, $|y_0^G|$ is divisible by $q$, and $|(x_0y_0)^G|$ is divisible by $qr$.
\end{enumerate}
\end{proposition}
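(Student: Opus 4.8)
The plan is to analyse the element $x_0y_0$. Since $q$ and $r$ divide $|G/\ze{G}|_{p'}$, they differ from $p$, so $x_0y_0$ is $p$-regular; it is non-central because its $q$-part $x_0$ is non-central, and $\ce{G}{x_0y_0}=\ce{G}{x_0}\cap\ce{G}{y_0}$. Write $a:=|x_0^G|$, $b:=|y_0^G|$ and $v:=|(x_0y_0)^G|$, so that $a,b,v$ are vertices of $\Gamma_p(G)$ with $a\mid v$, $b\mid v$ and $v\mid ab$. As $|x_0|$ and $|y_0|$ are coprime, both $x_0$ and $y_0$ are powers of $x_0y_0$, and Lemma~\ref{power} yields that $a$ and $v$, as well as $b$ and $v$, are equal or twins; hence $a,b,v$ lie in one twin class $K$, which is a clique because twins are adjacent. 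Finally, by Lemma~\ref{prime_power} the non-completeness of $\Gamma_p(G)$ forces every vertex, in particular $a,b,v$, to have at least two prime divisors, and by Lemma~\ref{connected} the graph is connected; since $v$ has a non-neighbour (a $k$-regular non-complete graph having more than $k+1$ vertices), that non-neighbour lies outside $K$, so $K$ is a proper clique.

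For part (a) I would exploit this together with connectedness: there is a vertex outside $K$ adjacent to $v$, hence (via the twin relations) adjacent to all of $K$, and since it is a twin of none of them there is then a non-central $p$-regular element $u$ whose class size is adjacent to that vertex but not to $v$, so $(|u^G|,v)=1$ and therefore $(|u^G|,a)=(|u^G|,b)=1$. Now I would invoke the $p$-separability facts recalled in Remark~\ref{remark}: a product of two coprime $p$-regular classes is a $p$-regular class, and two $p$-regular elements with coprime class sizes $c,d$ satisfy $G=\ce{G}{c}\ce{G}{d}$, so $|c^{\ce{G}{d}}|=|c^G|$. This gives $G=\ce{G}{x_0y_0}\ce{G}{u}=\ce{G}{x_0}\ce{G}{u}=\ce{G}{y_0}\ce{G}{u}$, whence — after replacing $x_0,y_0$ by suitable conjugates — one may assume $x_0y_0\in H:=\ce{G}{u}$, with $u$ central in $H$, with $q,r$ dividing $|H/\ze{H}|_{p'}$, and with $a,b,v$ reappearing as the $H$-class sizes of $x_0,y_0,x_0y_0$. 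Working with primary decompositions inside $H$ (or, equivalently, pulling the resulting elements back to $G$), and using that in the $k$-regular graph $\Gamma_p(G)$ a neighbour of $v$ cannot acquire an extra neighbour, I would isolate a non-central $q$-element $x_1$ and a non-central $r$-element $y_1$ whose class sizes are neighbours of $v$, are coprime to one another, and carry exactly $r$, respectively $q$, among the primes $\{q,r\}$.

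For part (b) I would use the symmetry $q\leftrightarrow r$ of the hypotheses to reduce to proving $r\mid a$; the same argument then gives $q\mid b$, and $qr\mid v$ follows from $a\mid v$ and $b\mid v$. Assume $r\nmid a$. Then $\ce{G}{x_0}$ contains a Sylow $r$-subgroup $R$ of $G$, and since $y_0\in\ce{G}{x_0}$ is an $r$-element one may choose $R$ with $y_0\in R$, so that $x_0$ centralizes the whole of $R$. For any $r$-element $z\in R$, the product $x_0z$ is a non-central commuting product, so Lemma~\ref{power} shows $|(x_0z)^G|$ is equal or twin to $a$; combining this with the element $x_1$ of part (a) — whose class size is a neighbour of $v$ divisible by $r$ but not by $q$ — and with the factorization $G=\ce{G}{x_0}\ce{G}{t}$ for a suitable $p$-regular $t$ with $(|t^G|,a)=1$, I would force a vertex of degree larger than $k$, a contradiction.

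The step I expect to be the main obstacle is the construction in part (a): obtaining $x_1$ and $y_1$ \emph{simultaneously} with coprime class sizes while keeping the sharp conditions $(|x_1^G|,qr)=r$ and $(|y_1^G|,qr)=q$. A priori two neighbours of $v$ can share a prime, and the primary parts one extracts can be central, so ruling these out is exactly where the regularity hypothesis, the twin relations, and the $p$-separable coprime-class arithmetic must be interlocked carefully, adapting the arguments of \cite{RA}.
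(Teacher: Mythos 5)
Your preliminary reductions are correct and match the paper's: $x_0,y_0$ are non-central powers of the $p$-regular element $x_0y_0$, so by Lemma~\ref{power} the three vertices $a=|x_0^G|$, $b=|y_0^G|$, $v=|(x_0y_0)^G|$ are pairwise equal or twins, and Lemmas~\ref{connected} and \ref{prime_power} rule out prime-power vertices and disconnectedness. But the heart of part (a) --- actually producing the two \emph{coprime neighbours} of $v$ and pinning down their primes --- is missing, as you yourself flag. The observation you need, and which the paper uses, is that $\mathcal{N}(v)$ itself cannot be a clique: if it were, then by $k$-regularity every vertex of $\mathcal{N}(v)\cup\{v\}$ would already have all $k$ of its neighbours inside that set, making it a connected component, which contradicts connectedness plus non-completeness. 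Hence $\mathcal{N}(v)$ contains two non-adjacent, i.e.\ coprime, vertices $|x_1^G|,|y_1^G|$; Lemma~\ref{power} applied to primary decompositions lets one take $x_1,y_1$ of prime power order; and then the commuting-product argument (if $q\nmid|x_1^G|$ then a conjugate of $x_0$ lies in $\ce{G}{x_1}$, and if $x_1$ were not a $q$-element then $|x_1^G|$ would be equal or twin to $|x_0^G|$, hence to $v$, contradicting $(|x_1^G|,|y_1^G|)=1$ with $|y_1^G|\in\mathcal{N}(v)$) forces $x_1$ to be a $q$-element with $(|x_1^G|,qr)=r$, and symmetrically for $y_1$. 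Your alternative route --- extracting a vertex $|u^G|$ coprime to $v$ and passing to $H=\ce{G}{u}$ --- is both undeveloped and shaky at its first step: coprimality of $|(x_0y_0)^G|$ and $|u^G|$ gives the factorisation $G=\ce{G}{x_0y_0}\ce{G}{u}$, but it does not by itself justify ``one may assume $x_0y_0\in\ce{G}{u}$'' (commuting conjugates), and nothing after that point is an argument.

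Part (b), by contrast, follows in two lines once (a) is available, and your sketch is close but uses the wrong witness: to show $r\mid a$, assume not; then $\ce{G}{x_0}$ contains a Sylow $r$-subgroup, so after conjugating, $x_0$ commutes with the $r$-element $y_1$ from part (a) (not with $x_1$, whose class size is the one divisible by $r$). Lemma~\ref{power} then makes $|y_1^G|$ equal or twin to $a$, hence to $v$; but $|x_1^G|\in\mathcal{N}(v)$ while $(|x_1^G|,|y_1^G|)=1$, a contradiction. No auxiliary element $t$ or degree count is needed. In summary: the setup is right, but the decisive step of (a) --- that regularity plus connectedness forbids $\mathcal{N}(v)$ from being a clique --- is absent, so the proposal as written does not constitute a proof.
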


\begin{proof}
(a) Consider the subset $\{|x_0^G|, |y_0^G|, |(x_0y_0)^G|\}$ of vertices of $\Gamma_p(G)$. We claim that any two elements of this subset (in case of existence) are twins in $\Gamma_p(G)$. Since $x_0$ is a non-central power of $x_0y_0$, by Lemma \ref{power} it follows that $|(x_0y_0)^G|$ and $|x_0^G|$ are either equal or twins, and analogously $|(x_0y_0)^G|$ and $|y_0^G|$ are also either equal or twins. Moreover, being equal or twins is an equivalence relation on the vertex set of $\Gamma_p(G)$, so $|x_0^G|$ and $|y_0^G|$ are either equal or twins too. 

If $\mathcal{N}(|(x_0y_0)^G|)$ is a clique, then $\mathcal{N}(|(x_0y_0)^G|)\cup\{|(x_0y_0)^G|\}$ must be a connected component of $\Gamma_p(G)$ due to its regularity. However this graph is connected due to Lemma \ref{connected}, so we deduce that $\Gamma_p(G)$ is complete, which contradicts our assumptions. Consequently, there exist two vertices $|x_1^G|,|y_1^G|\in\mathcal{N}(|(x_0y_0)^G|)$, where $x_1,y_1\in G\smallsetminus\ze{G}$ are $p$-regular elements, such that $(|x_1^G|,|y_1^G|)=1$. Besides, in virtue of their primary decompositions and Lemma \ref{power}, it can be assumed that the orders of both $x_1$ and $y_1$ are prime powers.

Now we prove that, up to interchanging $x_1$ and $y_1$, it holds that $x_1$ is a $q$-element with $(|x_1^G|,qr)=r$. Observe that $q$ cannot divide both $|x_1^G|$ and $|y_1^G|$, so let us assume that $(|x_1^G|,q)=1$. It follows, up to conjugation, that $x_0\in\ce{G}{x_1}$. If $x_1$ is not a $q$-element, then there are powers of the $p$-regular element $x_0x_1\notin\ze{G}$ which yield $x_0$ and $x_1$, so by Lemma \ref{power} we get that $|(x_0x_1)^G|$, $|x_0^G|$ and $|x_1^G|$ are either equal or twins. But $|x_0^G|$ and $|(x_0y_0)^G|$ are also either equal or twins due to the first paragraph, so we deduce that $|x_1^G|$ and $|(x_0y_0)^G|$ are either equal or twins, which is not possible since $|y_1^G|\in \mathcal{N}(|(x_0y_0)^G|)$ and $(|x_1^G|,|y_1^G|)=1$. Thus $x_1$ is a $q$-element. Further, if $r$ does not divide $|x_1^G|$, then $x_1$ is a $r$-element by the same previous arguments, a contradiction. Hence $(|x_1^G|, qr)=r$, as wanted. The remaining claims concerning $y_1$ and its class size follow with similar reasonings.  

\noindent (b) We continue with the notation of statement (a). Let us suppose that $r$ does not divide $|x_0^G|$, so we have $x_0y_1=y_1x_0$ up to conjugation. In virtue of Lemma \ref{power} we obtain that $|x_0^G|$ and $|y_1^G|$ are either equal or twins. But $|x_0^G|$ and $|(x_0y_0^G)|$ are either equal or twins, as we saw at the beginning of the proof, so $|y_1^G|$ and $|(x_0y_0^G)|$ are either equal or twins, a contradiction. Hence $r$ divides $|x_0^G|$, and one can analogously prove that $q$ divides $|y_0^G|$. Now $qr$ divides $|(x_0y_0)^G|$ because this class size is divisible by $|x_0^G|$ and $|y_0^G|$.
\end{proof}

\bigskip

\begin{proof}[Proof of the Main Theorem]
Arguing by contradiction, we suppose that $\Gamma_p(G)$ is $k$-regular but non-complete. We divide the proof into the following three steps.

\smallskip

\noindent \textbf{\underline{Step 1.}} $|G/\ze{G}|_{p'}$ cannot be a power of a prime.

\smallskip

Let us suppose that $\pi(G/\ze{G})=\{p,q\}$, for some prime $q\neq p$. In virtue of Lemma \ref{prime_power}, we get that all non-central $p$-regular elements of $G$ do not have prime power class size, that is, $|g^G|$ is divisible by $p$ and $q$ for every vertex $|g^G|$ of $\Gamma_p(G)$. Hence the graph is complete, which is a contradiction.

\smallskip

\noindent \textbf{\underline{Step 2.}} If $q\neq r$ are prime divisors of $|G/\ze{G}|_{p'}$, then there are no elements $x_0,y_0\in G\smallsetminus \ze{G}$ such that $x_0$ is a $q$-element, $y_0$ is a $r$-element, and $x_0y_0=y_0x_0$.

\smallskip

Otherwise, in virtue of Proposition \ref{key} (a), there exists a $q$-element $x_1\in G\smallsetminus \ze{G}$ and a $r$-element $y_1\in G\smallsetminus\ze{G}$ such that $|x_1^G|,|y_1^G|\in\mathcal{N}(|(x_0y_0)^G|)$ and $(|x_1^G|,|y_1^G|)=1$ with $(|x_1^G|,qr)=r$ and $(|y_1^G|,qr)=q$. Set $A:=\mathcal{N}(|y_1^G|)\smallsetminus\{|(x_0y_0)^G|\}$. 

We claim that every element of $A$ is divisibly by either $q$ or $r$. Let us suppose that there exists $|g^G|\in A$ such that $(|g^G|, qr)=1$. Applying the primary decomposition of $g$ joint with Lemma \ref{power}, we may assume that $g$ is a $t$-element for some prime $t\neq p$. Furthermore, we may assume $t\neq q$ without loss of generality. Since $q$ does not divide $|g^G|$, then up to conjugation $gx_0=x_0g$, and in virtue of Lemma \ref{power} one can easily obtain that $|g^G|$ and $|x_0^G|$ are either equal or twins. If we reproduce the same arguments with $|x_1^G|$ instead of $|x_0^G|$, then we deduce that $|x_1^G|$ and $|g^G|$ are either equal or twins. Thus $|g^G|\in A=\mathcal{N}(|y_1^G|)$ leads to the contradiction $(|x_1^G|,|y_1^G|)\neq 1$.

Since we have proved that every element of $A$ is divisibly by either $q$ or $r$, then by Proposition \ref{key} (b) it holds that the degree of $|(x_0y_0)^G|$ is at least $|A|+|\{|x_1^G|,|y_1^G|\}|=|A|+2$. Nevertheless the degree of $|y_1^G|$ is clearly $|A|+1$, which is a contradiction due to the regularity of the graph.

\smallskip

\noindent \textbf{\underline{Step 3.}} The final contradiction.

\smallskip

Set $\pi(G/\ze{G})=\{p, q_1,q_2,...,q_n\}$, where $\{q_1,q_2,...,q_n\}$ are pairwise distinct primes, being all different from $p$. By Step 1 we may affirm $n\geq 2$. Let us show that for every $1\leq i\leq n$ and for every $q_i$-element $g_i\in G\smallsetminus \ze{G}$ it holds that $\{q_1,...,q_n\}\smallsetminus\{q_i\}\subseteq \pi(g_i^G)$ and $q_i\notin \pi(g_i^G)$. Arguing by contradiction, and w.l.o.g., we may first suppose that there is a $q_1$-element $g_1\in G\smallsetminus \ze{G}$ such that $|g_1^G|$ is not divisible by $q_2$. Then there exists a (non-central) Sylow $q_2$-subgroup $Q$ of $G$ such that $Q\leqslant \ce{G}{g_1}$, and so we can take a $q_2$-element $h\in Q\smallsetminus \ze{G}$. But this is not possible due to Step 2. Secondly, if $q_1$ divides $|g_1^G|$, then $\pi(g_1^G)$ contains $\pi(G/\ze{G})\smallsetminus\{p\}$, and the incompleteness of $\Gamma_p(G)$ implies the existence of some vertex $|a^G|$ with $(|a^G|,|g_1^G|)=1$, where $a\in G\smallsetminus\ze{G}$ is a $p$-regular element. This forces $|a^G|$ to be a $p$-power, and so $\Gamma_p(G)$ is complete by Lemma \ref{prime_power}, a contradiction. Observe that $\pi(g_i^G)$ may also contains $p$.

Let us consider that $n\geq 3$. Observe that if $g\in G\smallsetminus\ze{G}$ is a $p$-regular element, then its primary decomposition is $g=g_{k_1}\cdots g_{k_r}$ where $g_{k_j}$ is the $q_{k_j}$-part of $g$, with $\{k_1,...,k_r\}\subseteq \{1,...,n\}$. We may assume that $g_{k_j}$ is non-central for every $1\leq j\leq r$, since otherwise $|g^G|$ is invariant if we remove $g_{k_j}$ from the primary decomposition of $g$. Hence, $|g^G|$ and $|g_{k_j}^G|$ are either equal or twins by Lemma \ref{power}. Due to the assertion proved in the above paragraph, we have that the prime divisors of $|g_{k_j}^G|$ different from $p$ are $\{q_1,...,q_n\}\smallsetminus\{q_{k_j}\}$, for every $j\in \{1,...,r\}$. Thus, if $r>1$, then $\{q_1,...,q_n\}\subseteq \pi(g^G)$. Thus $|g^G|$ is a complete vertex of $\Gamma_p(G)$ because there is no non-trivial $p$-power class size in $G$ of a $p$-regular element via Lemma \ref{prime_power}, and it clearly follows that $\Gamma_p(G)$ is complete due to its regularity. On the other hand, if every $p$-regular element $g\in G\smallsetminus\ze{G}$ has prime power order, then it is easy to see that the assumption $n\geq 3$ also leads to the completeness of $\Gamma_p(G)$.

Consequently we may suppose $n=2$. In particular, we have that every $q_1$ element $g_1\in G\smallsetminus\ze{G}$ verifies $(|g_1^G|,q_1q_2)=q_2$, and every $q_2$-element $g_2\in G\smallsetminus\ze{G}$ verifies $(|g_2^G|,q_1q_2)=q_1$. Since there is no class size in $G$ of a $p$-regular element that is a non-trivial prime power in virtue of Lemma \ref{prime_power}, then both $|g_1^G|$ and $|g_2^G|$ are necessarily divisible by $p$. But this yields that both $|g_1^G|$ and $|g_2^G|$ are complete vertices of $\Gamma_p(G)$, and the regularity of this graph now leads to its completeness, the final contradiction.
\end{proof}


\bigskip

\noindent \textbf{Declarations.} The author declares no conflict of interest.



\begin{thebibliography}{99}


	

	\bibitem{BF1}
	\begin{sc} A. Beltrán and M.J. Felipe\end{sc}:
	On the diameter of a $p$-regular conjugacy class graph of finite groups,
	\emph{Comm. Algebra} \textbf{30} (2002) 5861--5873.
	
	
	\bibitem{BF2}
	\begin{sc} A. Beltrán and M.J. Felipe\end{sc}:
	Finite groups with a disconnected $p$-regular conjugacy class graph,
	\emph{Comm. Algebra} \textbf{32} (2004) 3503--3516.
	
	
	\bibitem{BFsurvey}
	\begin{sc} A. Beltrán and M.J. Felipe\end{sc}:
	\emph{Conjugacy classes of $p$-regular elements in $p$-solvable groups},
	Groups St. Andrews 2005, London Math. Soc. Lecture Note Series \textbf{339} (2007) 224--229, Cambridge University Press.
	
	
	\bibitem{BFM}
	\begin{sc} A. Beltrán, M.J. Felipe and C. Melchor\end{sc}:
	\emph{Graphs associated to conjugacy classes of normal subgroups in finite groups},
	\emph{J. Algebra} \textbf{443} (2015) 335--348.
	
	
	\bibitem{BCHP}
	\begin{sc} M. Bianchi, R.D. Camina, M. Herzog and E. Pacifici\end{sc}:
	Conjugacy classes of finite groups and graph regularity,
	\emph{Forum Math.} \textbf{27} (2015) 3167--3172.
	

	\bibitem{BHPS}
	\begin{sc} M. Bianchi, M. Herzog, E. Pacifici and G. Saffirio\end{sc}:
	On the regularity of a graph related to conjugacy classes of groups,
	\emph{European J. Combin.} \textbf{33} (2012) 1402--1407.
	
	
	\bibitem{many}
	\begin{sc} R.D. Camina, A. Maróti, E. Pacifici, C. Parker, K. Rekvényi, J. Saunders, V. Sotomayor, G. Tracey, M. van Beek\end{sc}:
	Groups with conjugacy classes of coprime sizes,
	\emph{accepted in Bull. London Math. Soc. } (2025). 
	
	
	\bibitem{FJS}
	\begin{sc} M.J. Felipe, M.K. Jean-Philippe and V. Sotomayor\end{sc}:
	Groups whose common divisor graph on $p$-regular classes has diameter three,
	\emph{Mediterr. J. Math.} \textbf{22} (2025), article 17.
	
	
	\bibitem{K}
	\begin{sc} L.S. Kazarin\end{sc}:
	On groups with isolated conjugacy classes,
	\emph{Izv. Vyssh. Uchebn. Zaved. Mat.} \textbf{7} (1981) 40--45; English translation in \emph{Soviet Math.} \textbf{25} (1981) 43--49.
	
	
	\bibitem{Lewis}
	\begin{sc} M.L. Lewis\end{sc}:
	An overview of graphs associated with character degrees and conjugacy class sizes in finite groups, 
	\emph{Rocky Mt. J. Math.} \textbf{38} (2008) 175--211.
	
	
	\bibitem{LZ}
	\begin{sc} Z. Lu and J. Zhang\end{sc}:
	On the diameter of a graph related to $p$-regular conjugacy classes of finite groups, 
	\emph{J. Algebra} \textbf{231} (2000) 705--712.	

	
	
	\bibitem{RA}
	\begin{sc} S.H. Rahimi and Z. Akhlaghi\end{sc}:
	On the regular graph related to the $G$-conjugacy classes,
	\emph{Bull. Aust. Math. Soc.} \textbf{105} (2022) 101--105.
	
	

	
\end{thebibliography}
\end{document}